\documentclass{amsart}
\usepackage{amssymb, latexsym}
\usepackage{url}

\DeclareMathOperator{\lcm}{\mathrm{lcm}}

\begin{document}
 \bibliographystyle{plain}

 \newtheorem{theorem}{Theorem}
 \newtheorem{lemma}{Lemma}
 \newtheorem{corollary}{Corollary}
 \newtheorem{conjecture}{Conjecture}
 \newtheorem{definition}{Definition}
 \newtheorem{proposition}{Proposition}
 \newtheorem*{main}{Main~Theorem}
 \newcommand{\mc}{\mathcal}
 \newcommand{\rar}{\rightarrow}
 \newcommand{\Rar}{\Rightarrow}
 \newcommand{\lar}{\leftarrow}
 \newcommand{\lrar}{\leftrightarrow}
 \newcommand{\Lrar}{\Leftrightarrow}
 \newcommand{\zpz}{\mathbb{Z}/p\mathbb{Z}}
 \newcommand{\mbb}{\mathbb}
 \newcommand{\A}{\mc{A}}
 \newcommand{\B}{\mc{B}}
 \newcommand{\cc}{\mc{C}}
 \newcommand{\D}{\mc{D}}
 \newcommand{\E}{\mc{E}}
 \newcommand{\F}{\mc{F}}
 \newcommand{\G}{\mc{G}}
  \newcommand{\ZG}{\Z (G)}
 \newcommand{\FN}{\F_n}
 \newcommand{\I}{\mc{I}}
 \newcommand{\J}{\mc{J}}
 \newcommand{\M}{\mc{M}}
 \newcommand{\nn}{\mc{N}}
 \newcommand{\qq}{\mc{Q}}
 \newcommand{\U}{\mc{U}}
 \newcommand{\X}{\mc{X}}
 \newcommand{\Y}{\mc{Y}}
 \newcommand{\itQ}{\mc{Q}}
 \newcommand{\C}{\mathbb{C}}
 \newcommand{\R}{\mathbb{R}}
 \newcommand{\N}{\mathbb{N}}
 \newcommand{\Q}{\mathbb{Q}}
 \newcommand{\Z}{\mathbb{Z}}
 \newcommand{\ff}{\mathfrak F}
 \newcommand{\fb}{f_{\beta}}
 \newcommand{\fg}{f_{\gamma}}
 \newcommand{\gb}{g_{\beta}}
 \newcommand{\vphi}{\varphi}
 \newcommand{\whXq}{\widehat{X}_q(0)}
 \newcommand{\Xnn}{g_{n,N}}
 \newcommand{\lf}{\left\lfloor}
 \newcommand{\rf}{\right\rfloor}
 \newcommand{\lQx}{L_Q(x)}
 \newcommand{\lQQ}{\frac{\lQx}{Q}}
 \newcommand{\rQx}{R_Q(x)}
 \newcommand{\rQQ}{\frac{\rQx}{Q}}
 \newcommand{\elQ}{\ell_Q(\alpha )}
 \newcommand{\oa}{\overline{a}}
 \newcommand{\oI}{\overline{I}}
 \newcommand{\dx}{\text{\rm d}x}
 \newcommand{\dy}{\text{\rm d}y}
 \newcommand{\id}{\text{d}}

\title[The group ring of $\Q/\Z$]{The group ring of $\Q/\Z$ and an\\ application of a divisor problem}
\author{Alan K. Haynes and Kosuke Homma}
\subjclass[2000]{11N25, 11B57}
\thanks{Research of the first author supported by EPSRC grant EP/F027028/1}
\keywords{Farey fractions, circle group, divisors}
\address{Department of Mathematics, University of York, Heslington, York YO10 5DD, UK}
\email{akh502@york.ac.uk}
\address{Department of Mathematics,
University of Texas, Austin, Texas 78712,
USA}\email{khomma@math.utexas.edu} \allowdisplaybreaks


\begin{abstract}
First we prove some elementary but useful identities in the group
ring of $\Q/\Z$. Our identities have potential applications to
several unsolved problems which involve sums of Farey fractions.
In this paper we use these identities, together with some
analytic number theory and results about divisors in short
intervals, to estimate the cardinality of a class of sets of
fundamental interest.
\end{abstract}


\maketitle
\section{Introduction}
Let $G$ be the multiplicative group defined by
\[G=\{z^\beta:\beta\in\Q/\Z\},\]
and let $(\ZG,+,\times)$ denote the group ring of $G$ with
coefficients in $\Z$. Clearly $G$ is just $\Q/\Z$ written
multiplicatively, but depending on the context it may be more
appropriate to speak either of addition in $\Q/\Z$ or of
multiplication in $G$. The group ring $\ZG$ is just the ring of
formal polynomials with integer coefficients, evaluated at
elements of $G$.

Questions about $\ZG$ arise naturally in several elementary
problems in number theory. Two interesting examples are the
estimation of powers of exponential sums and the problem of
approximation of real numbers by sums of rationals. Furthermore
the study of the additive structure of arithmetically interesting
groups is a fruitful area which has led to significant recent
developments \cite{tao2006}. The purpose of this paper is to
establish some basic algebraic results about $\ZG$ and then to
show how recent techniques for dealing with divisors in intervals
(\cite{cobeli2003},\cite{ford2008},\cite{ford2006}) can be
applied to estimate the cardinality of an arithmetically
interesting class of subsets of $G$.

For each $\beta\in\Q/\Z$ we denote the additive order of $\beta$
by $h(\beta)$. Then for each positive integer $Q$ we define a
subset $\F_Q$ of $\Q/\Z$ by
\[\F_Q=\{\beta\in\Q/\Z:h(\beta)\le Q\}.\]
Clearly the set $\F_Q$ can be identified with the Farey fractions
of order $Q$. For $Q\ge 3$ the set
\[\left\{z^{\beta}\in G:\beta\in\F_Q\right\}\subset G\]
is not closed under multiplication. However it does generate a
finite subgroup of $G$, which we call
\[G_Q=\left<z^{\beta}:\beta\in\F_Q\right>.\]
Obviously this definition of $G_Q$ is also valid if $Q<3$. It is easy to check that
\begin{equation}\label{GQdef1}
G_Q=\left\{z^\beta:\beta\in\Q/\Z,~ h(\beta)\mid\lcm\{1,2,\ldots ,Q\}\right\}.
\end{equation}
The cardinality of $G_Q$ is given by
\begin{align*}
|G_Q|=\sum_{q|\lcm\{1,2,\ldots ,Q\}}\varphi (q)=\lcm\{1,2,\ldots
,Q\}=\exp\left(\sum_{q\le Q}\Lambda (q)\right),
\end{align*}
where $\Lambda$ denotes the von-Mangoldt function. Thus by the
Prime Number Theorem there is a positive constant $c_1$ for which
\begin{align}\label{GQorder}
|G_Q|=\exp\left(Q+O\left(Qe^{-c_1\sqrt{\log Q}}\right)\right).
\end{align}
It appears to be a somewhat more difficult problem to
estimate the cardinality of the set
\begin{align}\label{FQsumset1}
\overbrace{\F_Q+\cdots +\F_Q}^{k-\text{times}}
\end{align}
when $k\ge 2$ is a small positive integer. The main result of our paper is the following theorem.
\begin{theorem}\label{FQordthm}
With $\delta =1- \frac{1+\log \log 2}{\log 2}$ we have as $Q
\rightarrow \infty$ that
\begin{align*}
|\F_Q+\F_Q|\asymp \frac{Q^4}{(\log Q)^{\delta} (\log \log Q)^{3/2}}.
\end{align*}
\end{theorem}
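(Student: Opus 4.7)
I would proceed in three stages: a group-ring reduction, identification of the relevant denominator set as a coprime multiplication table, and application of Ford's theorem on divisors in short intervals.

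Stage~1 (reduction). For each $q \ge 1$, set $T_q(z) = \sum_{h(\beta)\mid q} z^\beta \in \ZG$. The key identity, of the sort promised in the abstract, is
\[
T_{q_1}(z) \cdot T_{q_2}(z) = \gcd(q_1,q_2) \cdot T_{\lcm(q_1,q_2)}(z),
\]
which I would verify by a direct calculation in $\Q/\Z$ (the Chinese remainder theorem shows that $k_1(\lcm/q_1) + k_2(\lcm/q_2) \bmod \lcm(q_1,q_2)$ hits every residue class exactly $\gcd(q_1,q_2)$ times as $k_i$ ranges over $\Z/q_i\Z$). Comparing supports of both sides and taking the union over $q_1, q_2 \le Q$ gives
\[
\F_Q + \F_Q = \{\gamma \in \Q/\Z : h(\gamma) \in H(Q)\}, \qquad H(Q) := \{r \ge 1 : r \mid \lcm(q_1,q_2) \text{ for some } q_1, q_2 \le Q\},
\]
and consequently
\[
|\F_Q+\F_Q| = \sum_{r \in H(Q)} \varphi(r).
\]

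Stage~2 (structure of $H(Q)$). A prime-by-prime analysis of $r \mid \lcm(q_1,q_2)$ (split the prime-power factorization of $r$ according to whether $v_p(q_1) \ge v_p(q_2)$ or vice versa, and collect the two halves into $a$ and $b$) shows
\[
H(Q) = \{ab : 1 \le a, b \le Q,\ \gcd(a,b)=1\};
\]
that is, $H(Q)$ is the coprime multiplication table up to $Q$. Equivalently, $r \in H(Q)$ iff $r$ possesses a unitary divisor $d$ with $d \le Q$ and $r/d \le Q$, so in particular $r \le Q^2$.

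Stage~3 (counting). The distribution of $H(Q)$ is controlled by Ford's theorem \cite{ford2008}, which gives that the number of integers $n \le x$ with a divisor in $(y, 2y]$ is $\asymp x/(\log y)^\delta (\log\log y)^{3/2}$. A dyadic decomposition over the permissible range $[r/Q, Q]$ of unitary divisors, together with a short sieve that passes from ``divisor'' to ``unitary divisor'' (the loss being only a bounded multiplicative factor), yields
\[
|H(Q)| \asymp \frac{Q^2}{(\log Q)^\delta (\log\log Q)^{3/2}}.
\]
The upper bound of the theorem now follows immediately from $\varphi(r) \le r \le Q^2$. For the matching lower bound I would restrict to a positive-proportion subfamily of $H(Q)$ on which simultaneously $r \asymp Q^2$ and $\varphi(r) \asymp r$; for example, retain only $r = ab$ with $a, b \in [Q/2, Q]$ coprime and with no prime factor below $(\log Q)^{100}$. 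A standard sieve estimate shows this family still has cardinality $\asymp |H(Q)|$, and for these $r$ the weight $\varphi(r)/r$ is bounded below by a positive constant.

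The main obstacle is Stage~3: transferring Ford's ordinary-divisor count to the unitary-divisor count dictated by Stage~2, and verifying that the subsequent weighting by $\varphi(r)$ does not introduce an extra factor of $\log\log Q$ that would spoil the exponent $3/2$. Both require nontrivial sieve arguments resting on the quantitative anatomy-of-integers tools behind \cite{ford2008}. By contrast, the algebraic reduction in Stage~1 is essentially formal once the identity for $T_{q_1}T_{q_2}$ is observed.
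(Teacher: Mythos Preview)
Your Stages 1 and 2 are correct and reach exactly the paper's reduction (its Corollary \ref{ZGaddcor2}): $|\F_Q+\F_Q|=\sum_{r\in H(Q)}\varphi(r)$ with $H(Q)=\{ab:a,b\le Q,\ (a,b)=1\}$. In fact your $T_q$ identity is a cleaner route to this than the paper's $F_q$ machinery. The upper bound in Stage 3 also matches the paper's, and the passage from ``divisor'' to ``unitary divisor'' for the \emph{count} $|H(Q)|$ is indeed free, since Ford's lower-bound construction already uses squarefree integers.

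The gap is in your lower bound for the $\varphi$-weighted sum. The assertion that the subfamily of $r=ab$ with $a,b\in[Q/2,Q]$ and no prime factor below $(\log Q)^{100}$ still has cardinality $\asymp|H(Q)|$ is not a ``standard sieve estimate'' and is very likely false. In Ford's lower-bound integers $n=apq$ the arithmetic is carried by the small squarefree factor $a\le y^{1/8}$, and the key sum over $a$ is governed by $\sum_{p\le y^{1/8}}1/p\sim\log\log y$ together with an optimization at $\omega(a)\asymp(\log\log y)/\log 2$. Forcing all primes of $a$ above $(\log Q)^{100}$ replaces $\log\log y$ by $\log\log y-O(\log\log\log y)$; after raising to a power of size $\asymp\log\log y$ this costs a genuine power of $\log\log Q$, precisely the disaster you flag at the end. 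The paper avoids this entirely: rather than pruning small primes, it reruns Ford's construction with the weight $\varphi(n)$ left in place. Since $\varphi(apq)/(apq)=(\varphi(a)/a)(1-1/p)(1-1/q)\sim\varphi(a)/a$, the only change to Ford's argument is an extra factor $\varphi(a)/a$ in the sum $\sum_{a}|\mathcal{L}(a)|/a$. Because $\varphi(a)/a$ is a bounded nonnegative multiplicative function with positive mean, Ford's combinatorial lower bound for $\sum_a|\mathcal{L}(a)|/a$ goes through verbatim up to constants, yielding
\[
\sum_{\substack{a\le y^{1/8}\\ \mu(a)\ne 0}}\frac{\varphi(a)\,|\mathcal{L}(a)|}{a^2}\ \gg\ \frac{(\log Q)^{2-\delta}}{(\log\log Q)^{3/2}},
\]
and hence the desired lower bound for $I_Q$. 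In short: do not try to excise the small primes; absorb the $\varphi$-weight into Ford's method instead.
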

Theorem \ref{FQordthm} will be proved in Section 3. The most
technical part of the proof uses results of K. Ford
\cite{ford2006} about the distribution of integers whose divisors
have certain properties. Results of this type were also used in
\cite{cobeli2003} to study gaps between consecutive Farey
fractions. Before we get to our proof of Theorem \ref{FQordthm}
we will develop some important tools that give us information
about the corresponding elements of $\ZG$. For each nonnegative
integer $q$ we define $F_q\in\ZG$ by
\[F_q(z)=\sum_{\substack{\beta\in\Q/\Z \\ h(\beta)=q}}z^\beta=\sum_{\substack{a=1 \\ (a,q)=1}}^qz^{a/q}.\]
For conciseness, from on we will suppress the dependance of $F_q$
upon $z$. In Section \ref{secgrpring} we prove the following
general result.
\begin{theorem}\label{ZGaddthm}
Suppose that $q$ and $r$ are positive integers. Let $d=(q,r)$ and
let $d'$ be the largest divisor of $d$ which is relatively prime
to both $q/d$ and $r/d$. Then we have that
\begin{align}\label{ZGaddeqn1}
F_q\times F_r=\varphi (d)\sum_{e|d'}c(d',e)F_{qr/de},
\end{align}
where
\begin{equation*}
c(d',e)=\prod_{\substack{p|d'
\\p\nmid e}}\left(1-\frac{1}{p-1}\right).
\end{equation*}
\end{theorem}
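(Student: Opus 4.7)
My plan is to exploit the primary decomposition $\Q/\Z \cong \bigoplus_p \Q_p/\Z_p$, which induces a multiplicative factorisation of each $F_q$ over primes. If $q = \prod_p p^{\alpha_p}$, then every order-$q$ element $\beta$ decomposes uniquely as $\beta = \sum_p \beta_p$ with $h(\beta_p) = p^{\alpha_p}$, and writing $z^\beta = \prod_p z^{\beta_p}$ yields
\[
F_q = \prod_p F_{p^{\alpha_p}}\quad\text{in }\Z[G].
\]
Writing also $r = \prod_p p^{\beta_p}$, this reduces the theorem to computing $F_{p^\alpha}\times F_{p^\beta}$ for each single prime $p$ and then assembling the resulting local identities.

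For a single prime $p$, assume $\alpha \le \beta$ by commutativity. The case $\alpha = 0$ is trivial. When $0 < \alpha < \beta$, I rewrite $a/p^\alpha + b/p^\beta = (ap^{\beta-\alpha} + b)/p^\beta$ and observe that the numerator is automatically a unit modulo $p^\beta$, so a short count gives $F_{p^\alpha}F_{p^\beta} = \varphi(p^\alpha)F_{p^\beta}$. The substantive case is $\alpha = \beta > 0$; here I stratify pairs $(a,b)\in ((\Z/p^\alpha\Z)^*)^2$ according to $v := v_p((a+b)\bmod p^\alpha)\in\{0,1,\ldots,\alpha\}$, since $(a+b)/p^\alpha$ then has order $p^{\alpha-v}$ (with $v = \alpha$ giving the identity). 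The count of preimages of a given target equals $\varphi(p^\alpha)$ in every stratum except $v = 0$, where one must exclude the $p^{\alpha-1}$ residues $b\equiv c\pmod p$ that render $a = c-b$ non-invertible, producing the characteristic coefficient $\varphi(p^\alpha)\cdot\frac{p-2}{p-1}$. Assembling the strata yields
\[
F_{p^\alpha}^2 = \varphi(p^\alpha)\left(\frac{p-2}{p-1}F_{p^\alpha} + \sum_{j=0}^{\alpha-1}F_{p^j}\right).
\]

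To match these local identities with the global formula I set $\gamma_p = \min(\alpha_p,\beta_p)$ and $\delta_p = \max(\alpha_p,\beta_p)$, so $d = \prod p^{\gamma_p}$ and $qr/d = \prod p^{\delta_p}$. Unwinding the definition of $d'$ shows $d' = \prod_{\alpha_p = \beta_p > 0} p^{\alpha_p}$, i.e.\ exactly the set of primes where the local factor has more than a single $F$-term. The scalar prefactors $\varphi(p^{\gamma_p})$ from each case multiply to $\varphi(d)$, and the remaining product
\[
\prod_{p\nmid d'}F_{p^{\delta_p}}\cdot\prod_{p\mid d'}\left(\frac{p-2}{p-1}F_{p^{\alpha_p}} + \sum_{j=0}^{\alpha_p-1}F_{p^j}\right)
\]
is expanded by choosing, for each $p\mid d'$, an index $\epsilon_p\in\{0,1,\ldots,\alpha_p\}$, where $\epsilon_p = 0$ means ``take the $(p-2)/(p-1)$ term'' and $\epsilon_p = k \ge 1$ means ``take $F_{p^{\alpha_p-k}}$''. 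Setting $e = \prod_{p\mid d'} p^{\epsilon_p}$, one verifies that $\prod_{p\mid d',\,\epsilon_p = 0}\frac{p-2}{p-1} = c(d',e)$ and that the resulting subscript is exactly $qr/(de)$, which recovers the claimed identity. The main obstacle throughout is the $\alpha = \beta$ stratified count; the multiplicative reduction and the final combinatorial matching are essentially bookkeeping, which is why the prime-by-prime reduction is so useful.
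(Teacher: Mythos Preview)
Your proposal is correct and follows essentially the same strategy as the paper: reduce to prime-power factors via the multiplicativity $F_q=\prod_p F_{p^{\alpha_p}}$ (the paper's Lemma~1), compute $F_{p^\alpha}\times F_{p^\beta}$ in the two cases $\alpha<\beta$ and $\alpha=\beta$ (the paper's Lemma~2, with the same formulas you obtain), and then reassemble. The only difference is organizational: the paper groups the primes into three blocks (those outside $d$, those in $d/d'$, and those in $d'$) and treats each block at once, whereas you work prime-by-prime and identify the three cases directly; the content is identical.
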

There are several useful consequence of Theorem \ref{ZGaddthm},
some of which are formulated in the following two corollaries.
\begin{corollary}\label{ZGaddcor1}
With $q, r, d,$ and $d'$ as in Theorem \ref{ZGaddthm} we have
that
\begin{enumerate}
\item[(i)]If $d'=1$ then
\begin{equation*}
F_q\times F_r=\varphi (d)F_{qr/d},\quad\text{ and}
\end{equation*}
\item[(ii)] If $q$ and $r$ are squarefree then
\begin{align*}
F_q\times F_r=\varphi
(d)\sum_{e|d}\left(\prod_{p|e}\frac{p-2}{p-1}\right)F_{qre/d^2}.
\end{align*}
\end{enumerate}
\end{corollary}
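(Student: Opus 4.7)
The plan is to derive both parts as direct specializations of the formula in Theorem~\ref{ZGaddthm}, where the key observation for part (ii) will be a change of variables $e \mapsto d/e$ in the indexing divisor.

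For part (i), if $d'=1$ then the sum on the right side of \eqref{ZGaddeqn1} has exactly one term, corresponding to $e=1$. In that term the coefficient $c(d',e)=c(1,1)$ is an empty product over primes $p\mid 1$ with $p\nmid 1$, hence equals $1$. The formula therefore collapses to $\varphi(d)F_{qr/d}$, which is what is claimed.

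For part (ii), the first step is to verify that squarefreeness of $q$ and $r$ forces $d'=d$. Indeed, $d=(q,r)$ is squarefree, so any prime $p\mid d$ occurs to exactly the first power in both $q$ and $r$; hence $p\nmid q/d$ and $p\nmid r/d$, and by the definition of $d'$ we get $d'=d$. The second step is the reindexing: in \eqref{ZGaddeqn1}, substitute $e\mapsto d/e$, so that $e$ still ranges over divisors of $d$, while $qr/(de)$ becomes $qre/d^{2}$. It remains to rewrite the coefficient $c(d,d/e)$. Because $d$ is squarefree, every prime $p\mid d$ divides exactly one of $e$ and $d/e$, so the condition ``$p\mid d$ and $p\nmid d/e$'' is equivalent to ``$p\mid e$.'' Consequently,
\[
c(d,d/e)=\prod_{\substack{p\mid d\\ p\nmid d/e}}\left(1-\frac{1}{p-1}\right)=\prod_{p\mid e}\frac{p-2}{p-1},
\]
and plugging this back into the reindexed sum gives the stated identity.

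No step looks like a serious obstacle; the only point that requires care is the bijection on divisors and the accompanying translation of the condition defining $c(d,e)$, both of which rely crucially on $d$ being squarefree. Hence the main thing to get right in writing the proof is to justify the passage from $\{p\mid d,\ p\nmid d/e\}$ to $\{p\mid e\}$, which fails in general but is immediate here because each prime divisor of $d$ is distributed to exactly one of the complementary factors $e$ and $d/e$.
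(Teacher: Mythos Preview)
Your proof is correct and is precisely the specialization the paper has in mind: the paper itself omits the argument entirely, writing only that the corollary follows readily from Theorem~\ref{ZGaddthm} and leaving it to the reader. One small overstatement: the bijection $e\mapsto d/e$ on divisors holds for any $d$; it is only the equivalence ``$p\mid d,\ p\nmid d/e$'' $\Leftrightarrow$ ``$p\mid e$'' that needs $d$ to be squarefree, and you handle this point correctly.
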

\begin{corollary}\label{ZGaddcor2}
If $k$ is a positive integer then the set
\[\overbrace{\F_Q+\cdots +\F_Q}^{k-\text{times}}\]
consists of all elements $\beta\in\Q/\Z$ with $h(\beta)=
n_1n_2\cdots n_k$ for some positive integers $n_1,\ldots ,n_k\le
Q$ which satisfy $(n_i,n_j)=1$ for $i\not=j$.
\end{corollary}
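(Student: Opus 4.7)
The plan is to prove the two set-theoretic inclusions of the claimed equality separately. The easier direction is the converse: assume $h(\beta)=n_1\cdots n_k$ with the $n_i$ pairwise coprime and each $n_i\le Q$, and show $\beta\in\F_Q+\cdots+\F_Q$. Here I would invoke the Chinese Remainder Theorem to split the cyclic subgroup of $\Q/\Z$ of order $n_1\cdots n_k$ as the internal direct sum of the subgroups of orders $n_1,\ldots,n_k$, so that any element of that order decomposes as $\beta_1+\cdots+\beta_k$ with $h(\beta_i)=n_i\le Q$. The same conclusion can also be read off directly from iterating Corollary \ref{ZGaddcor1}(i), which yields $F_{n_1}\times\cdots\times F_{n_k}=F_{n_1\cdots n_k}$ in the pairwise coprime case.

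For the other inclusion I would begin with a representation $\beta=\beta_1+\cdots+\beta_k$ with $h(\beta_i)=q_i\le Q$ and produce pairwise coprime $n_i\le Q$ with $\prod_i n_i=h(\beta)$. The key fact is that $h(\beta)$ divides $\lcm(q_1,\ldots,q_k)$, so $v_p(h(\beta))\le \max_i v_p(q_i)$ for every prime $p$. I would then assign each prime $p$ dividing $h(\beta)$ to some index $\iota(p)$ achieving this maximum and set
\[n_i=\prod_{p\,:\,\iota(p)=i}p^{v_p(h(\beta))}.\]
The $n_i$ have disjoint prime supports and are therefore pairwise coprime, their product is $h(\beta)$, and since $v_p(n_i)\le v_p(q_i)$ for every prime $p$ we obtain $n_i\mid q_i$, hence $n_i\le Q$. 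Setting $n_i=1$ handles indices with no primes assigned to them (and also the degenerate case $h(\beta)=1$).

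The main obstacle is less an obstacle than a temptation: the corollary is presented as a consequence of Theorem \ref{ZGaddthm}, which invites pushing everything through the group-ring identity (\ref{ZGaddeqn1}) and tracking the parameters $d'$, $e$, and $c(d',e)$. That route works but is far heavier than necessary for a set-level statement. The one genuine subtlety lies in the forward direction: at primes where $\max_i v_p(q_i)$ is attained by more than one index, cancellation among the $\beta_i$'s may strictly reduce $v_p(h(\beta))$. This does not damage the construction, since one assigns the power $p^{v_p(h(\beta))}$ rather than $p^{v_p(q_{\iota(p)})}$ to $n_{\iota(p)}$, which keeps the divisibility $n_i\mid q_i$ intact.
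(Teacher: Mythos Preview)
Your argument is correct. The paper does not actually supply a proof of this corollary: it simply declares that Corollaries~\ref{ZGaddcor1} and~\ref{ZGaddcor2} ``follow readily from the Theorem'' and leaves the details to the reader, while also remarking (after a referee's observation) that Corollary~\ref{ZGaddcor2} admits a simple direct proof. What you have written is precisely that direct proof: CRT for the backward inclusion, and a prime-by-prime assignment via $p$-adic valuations for the forward inclusion. The paper's intended route, by contrast, would proceed by inducting on $k$ through the identity~(\ref{ZGaddeqn1}), observing that every $F_{qr/de}$ appearing on the right has $qr/de$ expressible as a product of two coprime divisors of $q$ and $r$ respectively. Your approach is shorter and avoids the bookkeeping of $d$, $d'$, and $c(d',e)$ entirely; the paper's route has the virtue of exhibiting the corollary as a genuine consequence of its group-ring machinery, but as you note this is overkill for a purely set-level statement.
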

As observed by the referee, we note that it is a simple matter to
prove Corollary \ref{ZGaddcor2} directly and that it is actually
all that we need for our proof of Theorem \ref{FQordthm}. However
there are other applications where the more general Theorem
\ref{ZGaddthm} is necessary. In particular, by using the large
sieve together with the group ring coefficients which appear in
our theorem we have been able to give a new proof
\cite{haynes2008} of an upper bound in metric number theory which
is crucial in the classical theory of the Duffin-Schaeffer
Conjecture \cite{harman1998}.

Finally we remark that it is not immediately clear how to extend
our results to estimate the number of elements in
(\ref{FQsumset1}) when $k>2$. We discuss this briefly at the end
of Section 3, where we also pose an open question of independent
interest.

Acknowledgements: We would like to thank our advisor Jeffrey
Vaaler for several discussions which influenced the direction of
our research on this problem, and for pointing out the formula
(\ref{GQorder}).


\section{The group ring of $\Q/\Z$}\label{secgrpring}
Our proof of Theorem \ref{ZGaddthm} depends on the following two
elementary lemmas.
\begin{lemma}\label{ZGaddlem1}
If $q$ and $r$ are relatively prime positive integers then
\begin{equation*}
F_q\times F_r=F_{qr}.
\end{equation*}
\end{lemma}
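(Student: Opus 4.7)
The plan is to expand both sides using the definition of $F_q$ and exhibit a bijection via the Chinese Remainder Theorem. Writing out the product gives
\begin{equation*}
F_q\times F_r = \sum_{\substack{1\le a\le q \\ (a,q)=1}}\sum_{\substack{1\le b\le r \\ (b,r)=1}} z^{a/q + b/r},
\end{equation*}
and since $a/q + b/r \equiv (ar+bq)/(qr) \pmod{1}$, we want to show that as $(a,b)$ ranges over pairs with $(a,q)=(b,r)=1$, the residue $ar+bq$ runs through each class in $(\Z/qr\Z)^{*}$ exactly once.

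Next I would invoke the Chinese Remainder Theorem: since $(q,r)=1$, reduction mod $q$ and mod $r$ gives an isomorphism $\Z/qr\Z \to \Z/q\Z \times \Z/r\Z$ which restricts to a bijection $(\Z/qr\Z)^{*} \to (\Z/q\Z)^{*}\times (\Z/r\Z)^{*}$. Under this identification, $ar+bq \equiv ar \pmod q$ and $ar+bq \equiv bq \pmod r$. Because $(r,q)=1$, multiplication by $r$ permutes $(\Z/q\Z)^{*}$, so $ar\bmod q$ runs bijectively over $(\Z/q\Z)^{*}$ as $a$ does; similarly $bq\bmod r$ runs bijectively over $(\Z/r\Z)^{*}$ as $b$ does. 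Hence $(a,b)\mapsto ar+bq\pmod{qr}$ is a bijection onto $(\Z/qr\Z)^{*}$.

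Putting this together, the double sum above coincides term-by-term with
\begin{equation*}
\sum_{\substack{1\le c\le qr \\ (c,qr)=1}} z^{c/(qr)} = F_{qr},
\end{equation*}
which finishes the proof. There is no real obstacle here; the only thing to be careful about is that addition in $\Q/\Z$ corresponds to multiplication in $G$, so the exponents combine additively mod $1$, which is exactly what lets CRT do the bookkeeping.
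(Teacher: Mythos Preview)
Your proof is correct and follows essentially the same approach as the paper: both expand the product and show that $(a,b)\mapsto ar+bq$ is a bijection from $(\Z/q\Z)^*\times(\Z/r\Z)^*$ onto $(\Z/qr\Z)^*$. You spell out the Chinese Remainder Theorem reasoning a bit more explicitly, but the argument is the same.
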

\begin{proof}
By our definitions we have that
\begin{equation}\label{ZGaddlem1eqn1}
F_q\times F_r=\sum_{\substack{a=1\\ (a,q)=1}}^q\sum_{\substack{b=1\\ (b,r)=1}}^rz^{(ar+bq)/qr}.
\end{equation}
If $c\in \Z,~(c,qr)=1$ then the equation
\[ar+bq=c\]
has a unique solution $(a,b)\in(\Z/q)^*\times (\Z/r)^*$. Conversely for any integers $a$ and $b$ with $(a,q)=(b,r)=1$ we have that $(ar+bq,qr)=1.$ Thus the map $(a,b)\mapsto ar+bq$ is a bijection from $(\Z/q)^*\times (\Z/r)^*$ onto $(\Z/qr)^*$. Comparing this with (\ref{ZGaddlem1eqn1}) now finishes the proof.
\end{proof}
\begin{lemma}\label{ZGaddlem2}
Let $p$ be prime and let $\alpha,\beta\in\Z, 1\le\alpha\le\beta$. Then we have that
\begin{align*}
F_{p^\alpha}\times F_{p^\beta}=
\begin{cases}
\varphi (p^\alpha)F_{p^\beta}&\text{ if }\alpha <\beta,\text{ and}\\
\varphi(p^\alpha)\sum_{i=0}^\alpha F_{p^i}-p^{\alpha-1}F_{p^\alpha}&\text{ if }\alpha=\beta.
\end{cases}
\end{align*}
\end{lemma}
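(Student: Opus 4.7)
The natural plan is to expand the product directly and count multiplicities. By definition,
\[F_{p^\alpha}\times F_{p^\beta}=\sum_{\substack{1\le a\le p^\alpha\\ (a,p)=1}}\sum_{\substack{1\le b\le p^\beta\\ (b,p)=1}}z^{(ap^{\beta-\alpha}+b)/p^\beta},\]
so the coefficient of each $z^{c/p^\beta}$ in the product is simply the number of admissible pairs $(a,b)$ with $ap^{\beta-\alpha}+b\equiv c\pmod{p^\beta}$. Everything reduces to counting these congruence solutions and then recognising the resulting sums as $F$-polynomials. Note that Lemma \ref{ZGaddlem1} is of no help here since the two moduli share the prime $p$; the argument is purely local at $p$.

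In the case $\alpha<\beta$ the counting is essentially immediate. Since $\beta-\alpha\ge 1$, we have $p\mid ap^{\beta-\alpha}$, so $(ap^{\beta-\alpha}+b,\,p)=(b,p)=1$ and every exponent that appears already has the form $c/p^\beta$ with $c\in(\Z/p^\beta)^*$. For each such $c$ and each $a\in(\Z/p^\alpha)^*$, the relation $b\equiv c-ap^{\beta-\alpha}\pmod{p^\beta}$ determines $b$ uniquely, and $(b,p)=1$ follows for free from $(c,p)=1$. Hence each $c$ is hit with multiplicity exactly $\varphi(p^\alpha)$, yielding $F_{p^\alpha}\times F_{p^\beta}=\varphi(p^\alpha)F_{p^\beta}$.

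The real work is in the case $\alpha=\beta$, where the exponent is $(a+b)/p^\alpha$ and $a+b$ need not be coprime to $p$. I would split according to whether $p\mid c$ or $p\nmid c$. A short elementary count — fix $a\in(\Z/p^\alpha)^*$ and require $b=c-a$ to also be coprime to $p$ — gives multiplicity $\varphi(p^\alpha)$ when $p\mid c$ and $\varphi(p^\alpha)-p^{\alpha-1}$ when $p\nmid c$, the subtracted $p^{\alpha-1}$ being the number of $a\in(\Z/p^\alpha)^*$ with $a\equiv c\pmod p$. The contribution from $p\nmid c$ is then $(\varphi(p^\alpha)-p^{\alpha-1})F_{p^\alpha}$. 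The contribution from $p\mid c$ reindexes via $c=pc'$ to $\varphi(p^\alpha)\sum_{c'=0}^{p^{\alpha-1}-1}z^{c'/p^{\alpha-1}}$, and partitioning the $c'$ according to the exact additive order of $z^{c'/p^{\alpha-1}}$ identifies that inner sum with $\sum_{i=0}^{\alpha-1}F_{p^i}$. Assembling the two pieces and absorbing the $F_{p^\alpha}$ into the $F_{p^i}$-sum produces exactly $\varphi(p^\alpha)\sum_{i=0}^{\alpha}F_{p^i}-p^{\alpha-1}F_{p^\alpha}$.

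The main (and really only) obstacle is the bookkeeping in the $\alpha=\beta$ case: one needs to see that the $p\mid c$ terms collapse to a complete sum over the $p^{\alpha-1}$-torsion subgroup of $G$, which then splits into $F_{p^i}$'s. Once this observation is made the identity falls out with no further input.
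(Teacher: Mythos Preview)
Your proof is correct and follows essentially the same approach as the paper: expand the product and count residues directly, then recognize the full sum over $\Z/p^\alpha$ as $\sum_{i=0}^{\alpha}F_{p^i}$. The only difference is organizational---in the $\alpha=\beta$ case you fix the target $c$ and count pairs (splitting on $p\mid c$ versus $p\nmid c$), whereas the paper fixes $a$, identifies the excluded set $\{a+np\}$, and shows these excluded sets assemble into $(\Z/p^\alpha)^*$ over each block of $p-1$ values of $a$; both routes yield $\varphi(p^\alpha)\sum_{c\bmod p^\alpha}z^{c/p^\alpha}-p^{\alpha-1}F_{p^\alpha}$ before the final decomposition.
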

\begin{proof}
First let us prove the case when $\alpha=\beta$. We have that
\begin{equation}\label{ZGaddlem2eqn1}
F_{p^\alpha}\times F_{p^\alpha}
=\sum_{\substack{a=1\\(a,p)=1}}^{p^\alpha}\sum_{\substack{b=1\\(b,p)=1}}^{p^\alpha}
z^{(a+b)/p^\alpha}.
\end{equation}
If $a$ is any integer with $(a,p)=1$ then working in $\Z/p^\alpha$ we have that
\begin{equation}\label{ZGaddlem2eqn2}
\left\{a+b~|~b\in(\Z/p^{\alpha})^*\right\}=\left(\Z/p^{\alpha}\right)\setminus\left\{a+np\mod p^\alpha~|~0\le n< p^{\alpha-1}\right\}.
\end{equation}
If $a'$ is another integer with $(a',p)=1$ and $a\not=a'\mod p$ then it is clear that
\begin{equation*}
a+np~\not=~a'+np\mod p^\alpha
\end{equation*}
for any $0\le n< p^{\alpha-1}$. Furthermore if $n$ and $n'$ are two integers with $0\le n<n'< p^{\alpha-1}$ then we also have that
\begin{equation*}
a+np~\not=~a+n'p\mod p^\alpha.
\end{equation*}
These comments reveal that for any integer $m$
\begin{equation*}
\left|\bigcup_{a=mp+1}^{(m+1)p-1}\left\{a+np\mod p^\alpha~|~0\le n< p^{\alpha-1}\right\}\right|=(p-1)p^{\alpha-1}.
\end{equation*}
Since $|(\Z/p^{\alpha})^*|=\varphi (p^\alpha)=(p-1)p^{\alpha-1}$ this implies that
\begin{equation*}
\bigcup_{a=mp+1}^{(m+1)p-1}\left\{a+np\mod p^\alpha~|~0\le n< p^{\alpha-1}\right\}=\left(\Z/p^{\alpha}\right)^*.
\end{equation*}
Combining this with (\ref{ZGaddlem2eqn1}) and (\ref{ZGaddlem2eqn2}) we find that
\begin{align*}
F_{p^\alpha}\times F_{p^\alpha}&=\varphi (p^{\alpha})\sum_{c=1}^{p^\alpha}z^{c/p^\alpha}-p^{\alpha-1}F_{p^{\alpha}}\\
&=\varphi (p^{\alpha})\sum_{i=0}^\alpha\sum_{\substack{c=1\\p^i\|c}}^{p^\alpha}z^{c/p^\alpha}-p^{\alpha-1}F_{p^{\alpha}}\\
&=\varphi (p^{\alpha})\sum_{i=0}^\alpha\sum_{\substack{d=1\\(d,p)=1}}^{p^{\alpha-i}}z^{d/p^{\alpha-i}}-p^{\alpha-1}F_{p^{\alpha}}\\
&=\varphi (p^{\alpha})\sum_{i=0}^\alpha F_{p^i}-p^{\alpha-1}F_{p^{\alpha}},
\end{align*}
and this is exactly what we were trying to show.

Now let us consider the easier case when $\alpha <\beta$. First observe that
\begin{equation}\label{ZGaddlem2eqn3}
F_{p^\alpha}\times F_{p^\beta}
=\sum_{\substack{a=1\\(a,p)=1}}^{p^\alpha}\sum_{\substack{b=1\\(b,p)=1}}^{p^\beta}
z^{(ap^{\beta-\alpha}+b)/p^\beta}.
\end{equation}
For each integer $a\in(\Z/p^{\alpha})^*$ we have that
\begin{align*}
\left\{ap^{\beta-\alpha}+b\mod p^{\beta}~|~b\in(\Z/p^{\beta})^*\right\}=\left(\Z/p^{\beta}\right)^*,
\end{align*}
and using this fact in (\ref{ZGaddlem2eqn3}) yields the desired result.
\end{proof}
Now we proceed to the proof of Theorem \ref{ZGaddthm}. The idea is
simply to split the factorizations of $q$ and $r$ into pieces
that we will then recombine using Lemmas \ref{ZGaddlem1} and
\ref{ZGaddlem2}.
\begin{proof}[Proof of Theorem \ref{ZGaddthm}]
Let $q_1, q_d, r_1,$ and $r_d$ be the unique positive integers
which satisfy
\begin{align*}
&q=q_1q_dd,~ r=r_1r_dd,\\
&(q_1,d)=(r_1,d)=1,~\text{ and}\\
&p~|~q_d\text{ or } r_d~\text{ and } p~\text{prime} \Rightarrow p~|~d,
\end{align*}
 and let $d_{qr}=d/d'$. It follows immediately that $(d',q_dr_d)=1$. Since $d'$ is the largest divisor of $d$ with $(q/d,d')=(r/d,d')=1$ it also follows that if $p$ is prime and $p~|~d_{qr}$ then $p~|~q_dr_d$. This implies that $(d_{qr},d')=1$. By Lemma \ref{ZGaddlem1} we have that
 \begin{align}
 F_q\times F_r&=(F_{q_1}\times F_{q_dd_{qr}}\times F_{d'})
 \times (F_{r_1}\times F_{r_dd_{qr}}\times F_{d'})\nonumber\\
 &=F_{q_1r_1}\times (F_{q_dd_{qr}}\times F_{r_dd_{qr}})\times (F_{d'}\times F_{d'}).\label{ZGaddthmeqn1}
 \end{align}
Now $(q/d,r/d)=1$ so we can find distinct primes $p_1,\ldots ,p_\ell, p_{\ell+1},\ldots ,p_k$ and positive integers $a_1,\cdots a_k$ for which
\begin{align*}
q_d&=p_1^{a_1}\cdots p_\ell^{a_\ell},\quad\text{ and}\\
r_d&=p_{\ell+1}^{a_{\ell+1}}\cdots p_k^{a_k}.
\end{align*}
By our comments above there must also be positive integers $b_1,\ldots, b_k$ for which
\begin{equation*}
d_{qr}=p_1^{b_1}\cdots p_k^{b_k}.
\end{equation*}
Now writing
\begin{align*}
q_dd_{qr}&=p_1^{\alpha_1}\cdots p_k^{\alpha_k},\quad\text{ and}\\
r_dd_{qr}&=p_1^{\beta_1}\cdots p_k^{\beta_k}
\end{align*}
we find that $\alpha_i,\beta_i\in\Z^+$ and that $\alpha_i\not=\beta_i$ for each $1\le i\le k$. By Lemma \ref{ZGaddlem2} this implies that
\begin{equation*}
F_{p_i^{\alpha_i}}\times F_{p_i^{\beta_i}}=
\begin{cases}
\varphi (p_i^{\alpha_i})F_{p_i^{\beta_i}}&\text{ if }1\le i\le\ell,\quad\text{ and}\\
\varphi (p_i^{\beta_i})F_{p_i^{\alpha_i}}&\text{ if }\ell <i\le k.
\end{cases}
\end{equation*}
Applying Lemma \ref{ZGaddlem1} then yields
\begin{align*}
F_{q_dd_{qr}}\times F_{r_dd_{qr}}&=\prod_{i=1}^k\left(\varphi \left(p_i^{\min\{\alpha_i,\beta_i\}}\right)F_{p_i^{\max\{\alpha_i,\beta_i\}}}\right)\\
&=\varphi (d_{qr})F_{q_dr_dd_{qr}}.
\end{align*}
Since \[\varphi (p^\alpha)-p^{\alpha-1}=\varphi (p^{\alpha})\frac{p-2}{p-1},\]
another application of Lemmas \ref{ZGaddlem1} and \ref{ZGaddlem2} gives us
\begin{align*}
F_{d'}\times F_{d'}&=\prod_{p^\alpha\|d'}\left(\varphi (p^{\alpha})\sum_{i=0}^{\alpha}c(d',d'/p^i)F_{p^i}\right)\\
&=\varphi (d')\sum_{e|d'}c(d',d'/e)F_e\\
&=\varphi (d')\sum_{e|d'}c(d',e)F_{d'/e}.
\end{align*}
Note that here we are also using the fact that $c(d',d'/e)$ is a multiplicative function of $e$. Returning to equation (\ref{ZGaddthmeqn1}) we now have that
\begin{align*}
F_q\times F_r&=F_{q_1r_1}\times \varphi (d_{qr})F_{q_dr_dd_{qr}}\times \varphi (d')\sum_{e|d'}c(d',e)F_{d'/e}\\
&=\varphi (d)\sum_{e|d'}c(d',e)F_{q_1r_1q_dr_dd_{qr}d'/e}\\
&=\varphi (d)\sum_{e|d'}c(d',e)F_{qr/de},
\end{align*}
which finishes the proof of Theorem \ref{ZGaddthm}.
\end{proof}
The assertions of Corollaries \ref{ZGaddcor1} and \ref{ZGaddcor2} follow readily from the Theorem. We leave the proofs to the reader.


\section{The cardinality of $\F_Q+\F_Q$}
In this section we will prove Theorem \ref{FQordthm}. We begin by writing
\[I_Q=|\F_Q+\F_Q|,\]
and by defining an arithmetical function
\begin{align*}
\tau^*_Q(n)=\sum_{\substack{ d \mid n \\ d,n/d \leq Q \\
(d,n/d)=1}} 1.
\end{align*}
By appealing to Corollary \ref{ZGaddcor2} we have that
\begin{align*}
I_Q=\sum_{\substack{n \leq Q^2 \\ \tau^*_Q(n) \geq 1}}
\varphi(n).
\end{align*}
A trivial upper bound for this quantity is given by
\begin{align*}
I_Q\le Q^2\cdot|\{n\le Q^2 : \tau^*_Q(n) \geq 1\}|.
\end{align*}
Estimating the cardinality of the set of integers which appears here is closely related to the ``multiplication table problem'' posed by Erd\"{o}s (\cite{erdos1955},\cite{erdos1960}). This problem is solved in \cite{ford2006}, where it is proved that the number of integers less than or equal to $Q^2$ which can be written as a product of
two positive integers $n,m\le Q$ is
\[O\left(\frac{Q^2}{(\log Q)^\delta (\log\log Q)^{3/2}}\right),\]
where $\delta=1-(1+\log \log 2)/\log 2$. This immediately gives us the upper bound in Theorem
\ref{FQordthm}. The lower bound is much more delicate and to deal
with it we will closely follow ideas developed by K. Ford. For
each $a\in\N$ we define $\mc{L}(a)\subset\R$ by
\[\mc{L}(a)= \bigcup_{d | a} [\log d - \log 2,\log d).\]
Roughly speaking, the Lebesgue measure of $\mc{L}(a)$ measures the clustering of the divisors of $a$. Our proof will rest crucially on the following estimate.
\begin{lemma}With $\mc{L}(a)$ defined as above we have as $Q\rar\infty$ that
\begin{align}\label{Fordest1}
\sum_{\substack{a \leq Q
\\ \mu(a) \ne 0}} \frac{\varphi (a)|\mc{L}(a)|}{a^2}\gg\frac{(\log Q)^{2-\delta}}{(\log\log Q)^{3/2}},
\end{align}
where $\delta$ is the same as in Theorem \ref{FQordthm} and $|\mc{L}(a)|$ denotes Lebesgue measure.
\end{lemma}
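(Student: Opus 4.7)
The plan is to convert the sum to an integral via Fubini and then reduce to Ford's bound \cite{ford2006} on the number of integers with a divisor in a dyadic interval, in a $\varphi(a)/a^{2}$-weighted and dyadically decomposed form. Since $t\in\mc L(a)$ if and only if $a$ has a divisor $d\in(e^{t},2e^{t}]$, Fubini gives
\[
S := \sum_{\substack{a\le Q \\ \mu(a)\ne 0}}\frac{\varphi(a)|\mc L(a)|}{a^{2}} \;=\; \int_{-\log 2}^{\log Q} T(t)\,dt,
\]
where
\[
T(t) := \sum_{\substack{a\le Q,\ \mu(a)\ne 0 \\ \exists\,d\mid a,\ e^{t}<d\le 2e^{t}}}\frac{\varphi(a)}{a^{2}}.
\]

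The main step is to prove that, for an absolute constant $C$ and all $C\le t\le \tfrac{1}{4}\log Q$,
\[
T(t)\;\gg\;\frac{\log Q}{t^{\delta}(\log t)^{3/2}}.
\]
I would obtain this by a dyadic decomposition of $a$. For each integer $j$ with $2t/\log 2 \le j \le \log_{2}Q$, Ford's theorem applied with $x=2^{j+1}$ and $y=e^{t}$ gives
\[
\#\{a\in(2^{j},2^{j+1}]:\mu^{2}(a)=1,\ \exists\,d\mid a,\ e^{t}<d\le 2e^{t}\}\;\gg\;\frac{2^{j}}{t^{\delta}(\log t)^{3/2}},
\]
where the squarefree restriction costs only a constant factor by a standard sieve argument. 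Since $\varphi(a)/a\asymp 1$ on a positive-density subfamily of Ford's extremal construction, the corresponding $\varphi(a)/a^{2}$-weighted sum at scale $2^{j}$ is
\[
\sum_{a\in(2^{j},2^{j+1}],\ \ldots}\frac{\varphi(a)}{a^{2}}\;\asymp\;\frac{1}{2^{j}}\cdot\#\{\cdots\}\;\gg\;\frac{1}{t^{\delta}(\log t)^{3/2}},
\]
and summing over the $\asymp\log Q$ admissible dyadic levels $j$ yields the displayed lower bound for $T(t)$.

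Granting this, integration produces
\[
S\;\gg\;(\log Q)\int_{C}^{(\log Q)/4}\frac{dt}{t^{\delta}(\log t)^{3/2}}\;\asymp\;(\log Q)\cdot\frac{(\log Q)^{1-\delta}}{(\log\log Q)^{3/2}}\;=\;\frac{(\log Q)^{2-\delta}}{(\log\log Q)^{3/2}},
\]
since $0<\delta<1$, as desired.

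The main obstacle is controlling the weighting and the squarefree restriction uniformly as Ford's theorem is applied across the range of dyadic scales $2^{j}$: one must verify that $\varphi(a)/a\gg 1$ on a positive proportion of the squarefree integers in $(2^{j},2^{j+1}]$ having a divisor in $(e^{t},2e^{t}]$, uniformly in $j$, and that the implicit constants in Ford's asymptotic do not degrade at the boundary $j \approx 2t/\log 2$. A naive pairing $a=dm$ with $d$ in the window would overcount integers with several divisors there, and a straightforward Cauchy--Schwarz second-moment argument yields only $T(t)\gg (\log Q)/t$, missing the power of $\log\log Q$ entirely; only the fine structure of Ford's extremal construction delivers the correct asymptotic.
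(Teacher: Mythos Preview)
Your Fubini identity $S=\int T(t)\,dt$ is correct, and the final integration does give the right order $(\log Q)^{2-\delta}/(\log\log Q)^{3/2}$, so the architecture is sound. The route, however, is genuinely different from the paper's. The paper gives no self-contained argument: it simply points to Ford's proof (cycle lemma, \cite{ford2006}, \S2) of the \emph{unweighted} bound
\[
\sum_{\substack{a\le Q\\ \mu(a)\ne 0}}\frac{|\mc L(a)|}{a}\ \gg\ \frac{(\log Q)^{2-\delta}}{(\log\log Q)^{3/2}}
\]
and asserts that the extra factor $\varphi(a)/a$ can be carried through Ford's construction by multiplicativity. In other words, the paper works \emph{inside} Ford's proof of the sum estimate, inserting the weight at the earliest stage. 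You instead try to quote Ford's \emph{counting} theorem $H(x,y,2y)\asymp x/((\log y)^{\delta}(\log\log y)^{3/2})$ as a black box and pass back to the weighted sum by dyadic partial summation.

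The obstacle you flag at the end is exactly where the black-box strategy stalls, and it is not cosmetic. Two points: first, Ford's $\asymp$ is only order-of-magnitude, so one cannot difference it to get a lower bound on the count in $(2^{j},2^{j+1}]$; this is repairable by partial summation in $x$ using the uniform lower bound $H(x,y,2y)\gg x\epsilon(y)$, which does yield the unweighted $T(t)\gg(\log Q)\epsilon(e^t)$. Second, and more seriously, to insert $\varphi(a)/a$ you need a positive proportion of Ford's lower-bound set to satisfy $\varphi(a)/a\gg 1$. No general principle gives this (the set has density $\epsilon(y)\to 0$, so a Markov argument on $\sum_{p\mid a}1/p$ over all $a\le x$ is useless), and Ford's lower bound for $H(x,y,2y)$ is itself derived from the very sum $\sum|\mc L(a)|/a$ whose weighted analogue you are proving. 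Thus, once you open the box to verify your ``positive-density subfamily'' claim, you are carrying out precisely the modification of Ford's cycle-lemma argument that the paper indicates. Your approach is therefore not wrong, but it does not buy a shortcut: the work collapses back to the paper's route.
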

A similar bound, without the factor of $\varphi (a)/a$ in the summand, is proved in Section 2 of \cite{ford2006}. The proof there uses a clever application of the cycle lemma from combinatorics. The presence of the factor $\varphi (a)/a$ in our sum adds only minor difficulties. In fact all changes which arise by this modification are overcome by the multiplicativity of the function $\varphi (a)/a$, together with the fact that the sequences of integers under consideration are well distributed. Instead of giving a lengthy proof of our own, we leave this as a matter of fact for the reader to check.

Now if $n\in\N$ and $y,z\in\R$ then let $\tau(n;y,z)$ denote the
number of divisors of $n$ lying in the interval $(y,z]$. We will
obtain our lower bound by considering only special types of
integers in the sum $I_Q$. First note that
\begin{align}\label{IQbound3}
I_Q \geq
 \sum_{\substack{ n \leq \frac{Q^2}{2} \\ \tau(n;Q/2,Q) \geq 1 \\ \mu(n) \ne 0}} \varphi(n),
\end{align}
since if $d|n,~d> Q/2,~$ and $n$ is square-free, then we must
have that $n/d< Q$ and $(d,n/d)=1.$ Next write $y=y(Q)=Q/2$ and $x=x(Q)=Q^2/2$, and define $\A_Q$ to be the set of square-free integers $n\le x$ which can be factored in the form $n=apq$ with $1\le a \leq y^{1/8}$, $p$ a prime such that $\log (y/p) \in \mc{L}(a)$,
and $q > y^{1/8}$ is a prime. For reference we display this definition as
\begin{align*}
\A_Q=\left\{n=apq\le x :\mu (n)=0, a\le y^{1/8}<q, \log (y/p)\in\mc{L} (a)\right\}.
\end{align*}
If $n=apq\in\A_Q$ then
working from the definition of $\mc{L}(a)$ we see that $p$ lies
in an interval of the form $(y/d,2y/d]$ for some $d|a$. Thus we
have that $y^{7/8} \leq p \leq 2y$, and this shows that there can
be at most two representations of $n$ of the form which qualify it as an element of $\A_Q$ (another one
could possibly come from interchanging the roles of $p$ and $q$).
Furthermore we have that $dp\in (y,2y]$, so $\tau(n; y, 2y) \geq
1$ and comparing with (\ref{IQbound3}) we find that
\begin{align}\label{IQbound1}
I_Q\ge \sum_{n\in\A_Q}\varphi (n)\ge \frac{1}{2}\sum_{\substack{a \leq y^{\frac{1}{8}} \\ \mu(a) \ne
0}} \varphi(a) \sum_{ \log (y/p) \in \mc{L}(a)} (p-1)I_Q'(a,p),
\end{align}
where
\begin{align*}
I_Q'(a,p) &= \sum_{\substack{y^{1/8} < q \leq x/ap \\ (q,p)=1\\
q \text{ prime}}} (q-1).
\end{align*}
For a lower estimate of $I_Q'$ we employ the Prime Number Theorem
to obtain
\begin{align*}
I_Q'(a,p) \gg \frac{\left( \frac{x}{ap} \right)^2}{\log
\frac{x}{ap}} - \frac{y^{1/4}}{(\log y^{1/8})}.
\end{align*}
Since $y^{7/8}\le x/ap\le 2y^{9/8}$ we have that $\log
\frac{x}{ap}\ll \log y$ and that
\begin{align*}
I_Q'(a,p)\gg \frac{x^2}{a^2 p^2 \log y}.
\end{align*}
Substituting this back into (\ref{IQbound1}) gives us
\begin{align}\label{IQbound2}
I_Q &\gg \frac{x^2}{\log y} \sum_{\substack{a \leq
y^{\frac{1}{8}} \\ \mu(a) \ne 0}} \frac{\varphi (a)}{a^2}
 \sum_{ \log (y/p) \in \mc{L}(a)} \frac{p-1}{p^2}.
\end{align}
Estimating the inner sum here is not difficult. First we divide the
set $\mc{L}(a)$ into connected components. Each of these
components has the form $[\log d-\log c,\log d)$ for some $d|a$
and $c\ge 2$. Thus we have that
\begin{align*}
\sum_{\log (y/p)\in[\log (d/c),\log d)}\frac{1}{p}&=\sum_{p\in(y/d,yc/d]}\frac{1}{p}\\
&=\log(\log (yc/d))-\log(\log (y/d))+O\left(\exp (-c_1\sqrt{\log
x})\right),
\end{align*}
where $c_1>0$ is a positive constant coming from the error term
in the Prime Number Theorem. With a little manipulation it is not
difficult to see that the latter expression is
\begin{align*}
\frac{\log c}{\log (y/d)}+O\left(\frac{1}{(\log Q)^2}\right)\gg
\frac{\log c}{\log Q}.
\end{align*}
Returning to (\ref{IQbound2}) we now have that
\begin{align*}
I_Q\gg \frac{Q^4}{(\log Q)^2} \sum_{\substack{a \leq
y^{\frac{1}{8}}
\\ \mu(a) \ne 0}} \frac{\varphi (a) |\mc{L}(a)|}{a^2},
\end{align*}
and combining this with (\ref{Fordest1}) finishes the proof of
our lower bound and of Theorem \ref{FQordthm}.

As we mentioned in the Introduction, it is an open problem to determine the order of magnitude of the cardinality of (\ref{FQsumset1}) when $k\ge 3$. The techniques used here would require some serious modifications to deal with those cases. Let us write $I_Q(k)$ for the cardinality of the sets in question. It follows from equation (\ref{GQdef1}) and Corollary \ref{ZGaddcor2} that for $k\ge \pi (Q)$ we have that
\[I_Q(k)=|G_Q|.\] We leave the reader with the following question. How small may we take $k$ (as a function of $Q$) and still conclude that
\[\log I_Q(k)\asymp\log |G_Q|?\]
This seems to be an interesting problem which may possibly be solved by techniques that differ from those used here.

\end{document}